\long\def\skipit#1{} 


\newcommand{\llceil}{\left\lceil}
\newcommand{\llfloor}{\left\lfloor}
\newcommand{\lra}{\longrightarrow}

\newcommand{\mdef}[1]{\textit{\textbf{#1}}}
\newcommand{\noi}{\noindent}

\newcommand{\remark}{\noi\textsc{Remark}. }
\newcommand{\rrceil}{\right\rceil}
\newcommand{\rrfloor}{\right\rfloor}

\newcommand{\vsa}{\vskip-12pt}
\newcommand{\vsb}{\vskip-6pt}




\documentclass[12pt]{amsart}
\topmargin 0 pt
\textheight 44\baselineskip
\advance\textheight by \topskip
\setlength{\textwidth}{155mm}
\setlength{\oddsidemargin}{5.6mm}
\setlength{\evensidemargin}{5.6mm}

\usepackage{amsfonts,amsmath,amssymb}
\usepackage{epsfig,graphicx}
\usepackage{latexsym,ifthen,calc}
\usepackage{multicol}
\usepackage[format=hang, margin=10pt]{caption}

\usepackage{xcolor}

\newcounter{hours}
\newcounter{minutes}
\newcommand{\printtime}{
	\setcounter{hours}{\time/60}%
	\setcounter{minutes}{\time-\value{hours}*60}
	\ifthenelse{\value{hours}<10}{0}{}\thehours:%
	\ifthenelse{\value{minutes}<10}{0}{}\theminutes}

\parskip3pt  

\pagestyle{myheadings}

\numberwithin{equation}{section}
\numberwithin{figure}{section}
\numberwithin{table}{section}

\newtheorem{thm}{Theorem}[section]
\newtheorem{cor}[thm]{Corollary}
\newtheorem{lem}[thm]{Lemma}
\newtheorem{prop}[thm]{Proposition}
\newtheorem{J-com}{JG-comment}[section]
\theoremstyle{definition}

\pagestyle{myheadings}

\begin{document}

\title{{Iterated Claws Have Real-Rooted Genus Polynomials}}

\author[J.L. Gross, T. Mansour, T.W. Tucker, and D.G.L. Wang]{Jonathan L. Gross
}
\address{
Department of Computer Science  \\
Columbia University, New York, NY 10027, USA; \newline
email: gross@cs.columbia.edu
}
\author[]{Toufik Mansour
}
\address{
Department of Mathematics  \\
University of Haifa, 3498838 Haifa, Israel;  \newline
email: tmansour@univ.haifa.ac.il}
\author[]{Thomas W. Tucker
}
\address{
Department of Mathematics  \\
Colgate University, Hamilton, NY 13346, USA; \newline
email: ttucker@colgate.edu
}
\author[]{David G.L. Wang
}
\address{
School of Mathematics and Statistics  \\
Beijing Institute of Technology, 102488 Beijing, P. R. China;  \newline
email: glw@bit.edu.cn}

\date{}

\begin{abstract}
We prove that the genus polynomials of the graphs called \textit{iterated claws} are real-rooted.  This continues our work directed toward the 25-year-old conjecture that the genus distribution of every graph is log-concave.  We have previously established log-concavity for sequences of graphs constructed by iterative vertex-amalgamation or iterative edge-amalgamation of graphs that satisfy a commonly observable condition on their partitioned genus distributions, even though it had been proved previously that iterative amalgamation does not always preserve real-rootedness of the genus polynomial of the iterated graph.  In this paper, the iterated topological operations are adding a claw and adding a 3-cycle, rather than vertex- or edge-amalgamation.  Our analysis here illustrates some advantages of employing a matrix representation of the transposition of a set of productions.
\end{abstract}
\subjclass[2010] {05A15, 05A20, 05C10}


\maketitle
%

\enlargethispage{-12pt}

\section{\large{Introduction}}

Graphs are implicitly taken to be connected.  Our \mdef{graph embeddings} are cellular and orientable.  For general background in topological graph theory, see \cite{BW09B,GrTu87}.  Prior acquaintance with the concepts of \textit{partitioned genus distribution} (abbreviated here as \mdef{pgd}) and \textit{production} (e.g., \cite{GKP10, PKG10}) is prerequisite to reading this paper.  Subject to this prerequisite, the exposition here is intended to be accessible both to graph theorists and to combinatorialists.

The \mdef{genus distribution} of a graph $G$ is the sequence
$g_0(G)$, $g_1(G)$, $g_2(G)$, $\ldots$,
where $g_i(G)$ is the number of combinatorially distinct embeddings of~$G$
in the orientable surface of genus~$i$.  A genus distribution contains only finitely many positive numbers, and  there are no zeros between the first and last positive numbers.  The \mdef{genus polynomial} is the polynomial
$$\Gamma_G(z) \,=\, g_0(G) + g_1(G)z +g_2(G)z^2 +\ldots$$

We say that a sequence $A=(a_k)_{k=0}^n$ is \mdef{nonnegative} if $a_k\ge0$ for all~$k$.  
An element~$a_k$ is said to be an \mdef{internal zero} of~$A$ if there exist indices $i$ and $j$ with  $i<k<j$, such that $a_ia_j\ne 0$ and $a_k=0$. If $a_{k-1}a_{k+1} \le a_k^2$ for all $k$, then $A$ is said to be \mdef{log-concave}.  If there exists an index~$h$ with $0\le h\le n$ such that
\[
a_0\,\le\, a_1\,\le\, \cdots\,\le\, a_{h-1}\,\le\, a_h\,\ge\, a_{h+1}\,\ge\,\cdots\,\ge\, a_n,
\]
then $A$ is said to be \mdef{unimodal}.  It is well-known that any nonnegative log-concave sequence without internal zeros is unimodal, and that any nonnegative unimodal sequence has no internal zeros. A prior paper \cite{GMTW13} by the present authors provides additional contextual information regarding log-concavity and genus distributions.

For convenience, we sometimes abbreviate the phrase ``log-concave genus distribution'' as \mdef{LCGD}.  Proofs that closed-end ladders and doubled paths have LCGDs \cite{FGS89} were based on closed formulas for their genus distributions.  Proof that bouquets have LCGDs \cite{GrRoTu89} was based on a recursion.

Stahl \cite{Stah97} used the term ``$H$-linear'' to describe chains of graphs obtained by amalgamating copies of a fixed graph $H$.  He conjectured that a number of ``$H$-linear'' families of graphs have genus polynomials with nonpositive real roots, which implies the log-concavity of their sequences of coefficients, by Newton's theorem.  Although it was shown \cite{Wag97} that the genus polynomials of some such families do indeed have real roots, Stahl's conjecture of real-rootedness for $W_4$-linear graphs (where $W_4$ is the 4-wheel) was disproved by Liu and Wang \cite{LW07}.  

Our previous paper \cite{GMTW13} proves, nonetheless, that the genus distribution of every graph in the $W_4$-linear sequence is log-concave.  Thus, even though Stahl's proposed approach to log-concavity via roots of genus polynomials is sometimes infeasible, \cite{GMTW13} does support Stahl's expectation that chains of copies of a graph are a relatively accessible aspect of the general LCGD problem.  Moreover, Wagner \cite{Wag97} has proved the real-rootedness of the genus polynomials for a number of graph families for which Stahl made specific conjectures of real-rootedness.  

Furthermore, we shall see here that Stahl's method of representing what we have elsewhere presented as a transposition of a \textit{production system} for a surgical operation on graph embeddings as a matrix of polynomials can simplify a proof, that a family of graphs has log-concave genus distributions.

\enlargethispage{12pt}

\medskip
\section{\large{The Sequence of Iterated Claws}}  

Let the rooted graph $(Y_0,u_0)$ be isomorphic to the dipole $D_3$, and let the root $u_0$ be either vertex of $D_3$.   For $n=1, 2, \ldots$, we define the \mdef{iterated claw} $(Y_n,u_n)$ to be the graph obtained the following surgical operation:
\begin{quote}
\mdef{Newclaw}:  Subdivide each of the three edges incident on the root vertex $u_{n-1}$ of the iterated claw $(Y_{n-1},u_{n-1})$, and then join the three new vertices obtained thereby to a new root vertex $u_n$.
\end{quote}
Figure \ref{fig:Y3} illustrates the graph $(Y_3,u_3)$.
\enlargethispage{-12pt}

\begin{figure} [ht]
\centering 
\includegraphics[width=0.9\textwidth,natwidth=380,natheight=110]{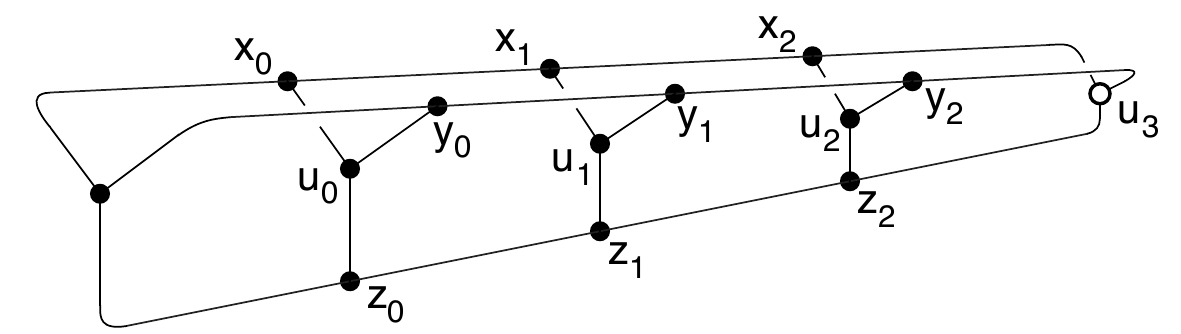}  
\caption{The rooted graph $(Y_3,u_3)$.}
\label{fig:Y3}
\vskip-3pt
\end{figure}  
\medskip

\noi The graph $K_{1,3}$ is commonly called a \mdef{claw graph}, which accounts for our name \textit{iterated claw}.  The notation $Y_n$ reflects the fact that a claw graph looks like the letter $Y$.  We observe that $Y_1\cong K_{3,3}$.   A recursion for the genus distribution of the iterated claw graphs is derived in \cite{GKP12}.  {We observe that, whereas all of Stahl's examples \cite{Stah97} of graphs with log-concave genus distributions are planar, the sequence of iterated claws has rising minimum genus.}

We have seen in previous studies of genus distribution (especially \cite{Gr12}) that the number of productions and simultaneous recursions rises rapidly with the number of roots and the valences of the roots.  The surgical operation newclaw is designed to circumvent this problem.

For a single-rooted iterated claw $(Y_n,u_n)$, we can define three \mdef{partial genus distributions}, also called \mdef{partials}.  Let
\begin{eqnarray*}
a_{n,i} &=& \text{the number of embeddings $Y_n\to S_i$ such that} \\[-3pt]
	&& \text{three different fb-walks are incident on the root $u_n$;} \\
b_{n,i} &=& \text{the number of embeddings $Y_n\to S_i$ such that exactly} \\[-3pt]
	&& \text{two different fb-walks are incident on the root $u_n$;}\\
c_{n,i} &=& \text{the number of embeddings $Y_n\to S_i$ such that} \\[-3pt]
	&& \text{one fb-walk is incident three times on the root $u_n$.}  
\end{eqnarray*}
We also define the generating functions 
\begin{align*}
\noalign{\vsb}
A_n(z) &\,=\, \sum_{i=0}^\infty a_{n,i}z^i \\
B_n(z) &\,= \,\sum_{i=0}^\infty b_{n,i}z^i \\
C_n(z) &\,=\,\sum_{i=0}^\infty c_{n,i}z^i  
\end{align*} 

A listing of the non-zero values of all the partials for every genus $i$ is called a \mdef{partitioned genus distribution}.  Clearly, the full genus distribution is the sum of the partials.  That is, for $i\,=\,0,1,2, \ldots$, we have
\begin{eqnarray*}
\noalign{\vskip-6pt}
g_i(Y_n) &=& a_{n,i}\,+\,b_{n,i}\,+\,c_{n,i} \\[-6pt]
\noalign{\noi and}
\Gamma_{Y_n}(z) &=& A_n(z)\,+\,B_n(z)\,+\,C_n(z) 
\end{eqnarray*}
We define $g_{n,i}=g_i(Y_n)$. 
\smallskip

\begin{thm}
For $n>1$, the effect on the pgd of applying the operation newclaw to the iterated claw $(Y_{n-1},u_{n-1})$ corresponds to the following system of three productions:
\begin{alignat}{7}
&a_i  \;\lra\;   &&\phantom{\;+\;} 12b_{i+1} &&\;+ 4c_{i+2}\label{prod:a*}\\
&b_i  \;\lra\;  ~~2a_i &&\;+ 12b_{i+1} &&\;+ 2c_{i+1} \label{prod:b*}\\
&c_i  \;\lra\;  8a_i && &&\;+  8c_{i+1} \label{prod:c*}
\end{alignat} 
\end{thm}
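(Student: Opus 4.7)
The plan is to enumerate directly the extensions of a fixed embedding of $(Y_{n-1},u_{n-1})$ to an embedding of $(Y_n,u_n)$ under the newclaw operation, and to classify each extension by its new partial type at $u_n$ and by the change $\Delta i$ in genus. Subdividing the three edges incident on $u_{n-1}$ inserts degree-2 vertices $v_1,v_2,v_3$; since the rotation at a degree-2 vertex is forced, subdivision preserves every fb-walk. Once $u_n$ is joined to each $v_j$, the vertex $v_j$ becomes cubic (two rotations) and so does $u_n$ (two rotations), so each starting embedding of $Y_{n-1}$ extends to exactly $2^{3+1}=16$ embeddings of $Y_n$. This matches the total multiplicities $12+4$, $2+12+2$, and $8+8$ on the right-hand sides of (\ref{prod:a*})--(\ref{prod:c*}).

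To read off $\Delta i$ from such an extension, note that newclaw adds $4$ vertices and $6$ edges, so Euler's formula gives $\Delta F=2-2\Delta i$; hence it suffices to count the net change in the number of fb-walks. I would label the three corners at $u_{n-1}$ by their fb-walks $f_1,f_2,f_3$ (with repetitions prescribed by the starting type), and observe that each edge $u_{n-1}v_j$ is flanked by the two walks at its endpoints' corners. The rotation choice at $v_j$ selects into which of these two walks the spoke $u_nv_j$ emerges, while the rotation at $u_n$ dictates how the three emerging walks are sewn into the new fb-structure around~$u_n$.

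The case analysis then splits by starting type. For type $a$ the three walks $f_1,f_2,f_3$ are distinct, so the three spokes draw from three disjoint pairs; after tracing faces one should find that no extension yields a type-$a$ target, and the $8\cdot 2=16$ rotation patterns partition as $12$ yielding type~$b$ with $\Delta i=1$ and $4$ yielding type~$c$ with $\Delta i=2$, matching (\ref{prod:a*}). For type $b$ one fb-walk is doubled, the edge separating its two corners is a distinguished edge, and the extensions partition into contributions $2a_i$, $12b_{i+1}$, $2c_{i+1}$ depending on which spokes emerge into the doubled walk and how they are linked at~$u_n$. For type $c$ a single walk $f_1$ is incident three times, the two rotation choices at each $v_j$ both emerge into $f_1$, and the $16$ cases split evenly as $8a_i$ and $8c_{i+1}$, the dichotomy being driven solely by the rotation at~$u_n$ together with a combined parity of the rotations at $v_1,v_2,v_3$. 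In every subcase the multiplicity and the $\Delta i$ are read off by face-tracing around the new star at~$u_n$.

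The principal obstacle is the bookkeeping of the fb-walk surgery at $u_n$: each spoke $u_nv_j$ locally cleaves the fb-walk into which it emerges, and then the cyclic rotation at $u_n$ concatenates these three cleaved walks, possibly merging some of them and possibly creating new closed walks. Ensuring that the merge pattern is correctly attributed in each of the 16 cases, and that the totals are independent of the particular representative type-$a$, type-$b$, or type-$c$ embedding chosen, is the delicate part. A matrix-of-polynomials representation of the transposed production system, of the kind advertised in the abstract, is well suited to organizing the $(v_1,v_2,v_3,u_n)$-rotation tuples and the resulting $(\text{target type},\Delta i)$ pairs, so that the verification of (\ref{prod:a*})--(\ref{prod:c*}) reduces to confirming a small checklist of local face traces.
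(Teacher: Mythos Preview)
Your outline is methodologically sound, but you should know that the paper does not actually prove this theorem: its entire proof reads ``This is Theorem~4.5 of \cite{GKP12}.'' What you have sketched---enumerating the $2^4=16$ extensions of a fixed embedding by assigning rotations at the four new cubic vertices $v_1,v_2,v_3,u_n$, and then face-tracing to read off the target partial type and the genus increment via $\Delta F = 2 - 2\Delta i$---is exactly the standard derivation of such productions and is presumably what \cite{GKP12} does in detail.

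Your Euler-formula bookkeeping ($\Delta V=4$, $\Delta E=6$) and your observation that the rotation at each $v_j$ selects which of the two adjacent fb-walks the spoke enters are both correct. The one place your write-up is still only a plan is the case-by-case face trace itself: you assert the $12{+}4$, $2{+}12{+}2$, $8{+}8$ splits but do not exhibit them. That verification is genuinely mechanical (sixteen rotation tuples per starting type, each traced once), and your proposal correctly identifies it as the delicate-but-routine part. If you intend a self-contained proof rather than a citation, you would need to tabulate those sixteen cases explicitly for at least one representative of each starting type; otherwise, citing \cite{GKP12} as the paper does is the honest alternative.
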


\begin{proof} \vsb
This is Theorem 4.5 of \cite{GKP12}.
\end{proof}
\smallskip

\begin{cor} \label{cor:pgd-claws}
For $n>1$, the effect on the pgd of applying the operation newclaw to the iterated claw $(Y_{n-1},u_{n-1})$ corresponds to the following recurrence relations:
\begin{alignat}{7}
&a_{n,i} \;=\; &&\phantom{\;+\;}\hskip10pt  2b_{n-1,i}   &&\;+ ~~8c_{n-1,i} \label{rec:a}\\
&b_{n,i} \;=\; 12a_{n-1,i-1} &&\;+\; 12b_{n-1,i-1} \;&& \label{rec:b}\\
&c_{n,i} \;=\;  \hskip6pt 4a_{n-1,i-2} &&\;+\;  \hskip5pt 2b_{n-1,i-1}  &&\;+ ~~8c_{n-1,i-1} \label{rec:c} 
\end{alignat} 
\end{cor}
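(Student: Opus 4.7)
The plan is to observe that Corollary \ref{cor:pgd-claws} is essentially a transposition of the production system of the preceding theorem: each production describes, for a single embedding of $Y_{n-1}$ of a given partial type, how many embeddings of $Y_n$ of each partial type and each genus are produced when the operation newclaw is applied. To obtain a recurrence for a partial count at level $n$, one instead fixes the output and sums the contributions from all possible inputs.

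Concretely, first I would read off, from each production \eqref{prod:a*}--\eqref{prod:c*}, the list of ``outputs''. The production for $a_i$ contributes $12$ embeddings to the $b$-partial at genus $i+1$ and $4$ embeddings to the $c$-partial at genus $i+2$; the production for $b_i$ contributes $2$ to the $a$-partial at genus $i$, $12$ to the $b$-partial at genus $i+1$, and $2$ to the $c$-partial at genus $i+1$; the production for $c_i$ contributes $8$ to the $a$-partial at genus $i$ and $8$ to the $c$-partial at genus $i+1$. Since every embedding of $Y_n$ arises in exactly one way from an embedding of $Y_{n-1}$ via newclaw, summing over all input partials and genera gives, without double-counting,
\begin{align*}
a_{n,i} &\,=\, 2\,b_{n-1,i} \,+\, 8\,c_{n-1,i},\\
b_{n,i+1} &\,=\, 12\,a_{n-1,i} \,+\, 12\,b_{n-1,i},\\
c_{n,i+1} &\,=\, 2\,b_{n-1,i} \,+\, 8\,c_{n-1,i},\\
c_{n,i+2} &\,=\, 4\,a_{n-1,i}.
\end{align*}
Then I would reindex the $b_n$ and $c_n$ relations (replacing $i+1$ by $i$ in the first and by $i$ and $i-1$ in the two $c$-relations, which must be combined), arriving at the stated recurrences \eqref{rec:a}--\eqref{rec:c}.

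Since this is purely a bookkeeping argument, there is essentially no obstacle; the only point to be careful about is to make sure that the shifts in the genus index from the productions (namely $i\mapsto i$, $i\mapsto i+1$, $i\mapsto i+2$) are inverted correctly when passing from productions to recurrences, and that the two separate contributions to the $c$-partial (one from $a_{n-1,\cdot}$ at shift $+2$, and two from $b_{n-1,\cdot}$ and $c_{n-1,\cdot}$ at shift $+1$) are added into a single relation for $c_{n,i}$. The proof is therefore a one-line invocation of the theorem followed by this reindexing.
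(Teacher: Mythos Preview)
Your approach is exactly the paper's: the proof there is a single sentence observing that the recurrence system is the transposition of the production system, and you have simply written out the bookkeeping in more detail. One small notational caveat: the two displayed ``$c$-relations'' you wrote with equality signs are really only the separate contributions (from the $+1$ and $+2$ genus shifts) rather than standalone identities, so it would be cleaner to write them as summands of a single expression for $c_{n,i}$ from the outset; but since you explicitly say they must be combined, this is harmless.
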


\begin{proof}\vsb
The recurrence system \eqref{rec:a}, \eqref{rec:b}, \eqref{rec:c} is reasonably described as a \mdef{transposition of the production system} \eqref{prod:a*}, \eqref{prod:b*}, \eqref{prod:c*}.
\end{proof}

It is convenient to express such a recurrence system in matrix form:
\begin{equation}
V(Y_n) = M(z)\cdot V(Y_{n-1})
\end{equation}
with the initial column vector
\begin{equation}
V(Y_0) ~= ~
\left[\begin{matrix} A(Y_0) \\ B(Y_0) \\ C(Y_0) \end{matrix}\right]
~=~  \left[\begin{matrix} 2 \\ 0 \\ 2z \end{matrix}\right] 
\end{equation}
and the \mdef{production matrix}
\begin{equation}
M(z)=
\left[\begin{matrix}
0 & 2 & 8 \\
12z & 12z &0 \\
~4z^2&~ 2z & 8z
\end{matrix}\right]
\end{equation}
\smallskip

\begin{prop}
The column vector $V(Y_n)$ is the product of the matrix power $M^{n}(z)$ with the column vector $V(Y_0)$.
\end{prop}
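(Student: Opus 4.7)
The plan is a straightforward induction on $n$, using the one-step matrix recurrence $V(Y_n)=M(z)\cdot V(Y_{n-1})$ displayed immediately above the statement. That recurrence is itself nothing more than a matrix repackaging of the three scalar recurrences in Corollary~\ref{cor:pgd-claws} (the first row gives \eqref{rec:a}, the second row gives \eqref{rec:b}, and the third row gives \eqref{rec:c}, where the factors of $z$ in $M(z)$ absorb the index shifts $i-1$ and $i-2$), so no new identity has to be established to get the induction started.

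For the base case $n=0$, the zeroth matrix power $M^{0}(z)$ is the $3\times 3$ identity matrix, and therefore $M^{0}(z)\cdot V(Y_0)=V(Y_0)$. For the inductive step, assume $V(Y_{n-1})=M^{n-1}(z)\cdot V(Y_0)$. Applying the one-step recurrence and the associativity of matrix multiplication gives
\[
V(Y_n)\;=\;M(z)\cdot V(Y_{n-1})\;=\;M(z)\cdot M^{n-1}(z)\cdot V(Y_0)\;=\;M^{n}(z)\cdot V(Y_0),
\]
which closes the induction.

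The only subtlety worth flagging is that the preceding theorem and Corollary~\ref{cor:pgd-claws} are stated only for $n>1$, so one should confirm separately that the matrix recurrence also yields the correct vector $V(Y_1)$, equivalently, that the partitioned genus distribution of $Y_1\cong K_{3,3}$ rooted at $u_1$ agrees with $M(z)\cdot V(Y_0)$. Because this reduces to a finite verification on a single graph whose pgd is already known, it is not a conceptual obstacle; once it is in hand, the induction above applies uniformly for every $n\ge 1$, and the main challenge of the proof is really just bookkeeping rather than any new combinatorial argument.
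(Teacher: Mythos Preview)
Your proof is correct and matches the paper's approach: the paper gives no explicit proof of this proposition, treating it as an immediate consequence of iterating the one-step matrix recurrence $V(Y_n)=M(z)\cdot V(Y_{n-1})$, which is exactly the induction you spell out. Your flag about the $n>1$ hypothesis in Corollary~\ref{cor:pgd-claws} is a fair point of care that the paper glosses over.
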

\smallskip

\begin{cor}\label{cor:co1}
The column vector $V(Y_n)$ is the product of the matrix power $M^{n+1}(z)$ with the (artificially labeled) column vector
$$V(Y_{-1}) = \begin{pmatrix} 0 \\ 0 \\ 1/4 \end{pmatrix}$$
\end{cor}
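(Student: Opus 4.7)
The plan is very short: the statement is essentially a one-line reindexing of the previous proposition, so all that is needed is to verify that the artificially defined vector $V(Y_{-1})=(0,0,1/4)^{T}$ is sent to $V(Y_0)=(2,0,2z)^{T}$ by one application of the production matrix $M(z)$.

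First I would compute $M(z)\cdot V(Y_{-1})$ entrywise. Using the explicit form of $M(z)$ displayed just above, the three components of $M(z)\cdot V(Y_{-1})$ are
\[
0\cdot 0 + 2\cdot 0 + 8\cdot \tfrac14,\qquad 12z\cdot 0+12z\cdot 0+0\cdot \tfrac14,\qquad 4z^{2}\cdot 0+2z\cdot 0+8z\cdot \tfrac14,
\]
which simplify to $2$, $0$, and $2z$ respectively. These are exactly the entries of $V(Y_0)$ given by the initial-vector equation in the excerpt, so $M(z)\cdot V(Y_{-1})=V(Y_0)$.

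Second I would invoke the preceding proposition, which asserts $V(Y_n)=M^{n}(z)\cdot V(Y_0)$. Substituting the identity just verified gives
\[
V(Y_n)=M^{n}(z)\cdot V(Y_0)=M^{n}(z)\cdot\bigl(M(z)\cdot V(Y_{-1})\bigr)=M^{n+1}(z)\cdot V(Y_{-1}),
\]
which is the claim of the corollary.

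There is no real obstacle here; the only thing worth noting is that the value $1/4$ in $V(Y_{-1})$ is not independently meaningful as a pgd (the ``graph'' $Y_{-1}$ does not exist), but is simply the unique scalar in the third coordinate for which a single extra application of $M(z)$ reproduces $V(Y_0)$. Thus the proof is just a short matrix-vector multiplication followed by a one-line application of the previous proposition.
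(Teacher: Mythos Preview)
Your proof is correct and is exactly the intended verification: the paper states the corollary without an explicit proof, relying on the reader to check that $M(z)\,V(Y_{-1})=V(Y_0)$ and then to apply the preceding proposition, which is precisely what you do.
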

\smallskip

\begin{cor}  \label{cor:column3}
To prove that every iterated claw has an LCGD, it is sufficient to prove that the sum of the third column of the matrix $M^{n}(z)$ is a log-concave polynomial.
\end{cor}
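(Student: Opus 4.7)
The plan is to derive this corollary directly from Corollary \ref{cor:co1}, together with the observation that $\Gamma_{Y_n}(z)$ equals the sum of the three entries of the column vector $V(Y_n)$.

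First I would apply Corollary \ref{cor:co1} to write $V(Y_n) = M^{n+1}(z)\, V(Y_{-1})$. Since $V(Y_{-1}) = (0,0,1/4)^{T}$, the matrix--vector product is a projection onto $\tfrac14$ times the third column of $M^{n+1}(z)$; that is, each of $A_n(z)$, $B_n(z)$, $C_n(z)$ is $\tfrac14$ times the corresponding entry in that column. Summing and using the identity $\Gamma_{Y_n}(z) = A_n(z) + B_n(z) + C_n(z)$ from Section 2 gives
\[
\Gamma_{Y_n}(z) \;=\; \tfrac{1}{4}\,\bigl(\,\text{sum of the entries in the third column of }M^{n+1}(z)\,\bigr).
\]
Multiplication by the positive constant $\tfrac14$ preserves log-concavity of a coefficient sequence, so log-concavity of the third-column sum of $M^{n+1}(z)$ implies log-concavity of $\Gamma_{Y_n}(z)$, which is precisely the LCGD property for $Y_n$. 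As $n$ ranges over the non-negative integers that index the iterated claws, the exponent $n+1$ ranges over the positive integers, so it suffices to verify the log-concavity condition for the third-column sum of $M^{n}(z)$ for every $n \ge 1$, exactly as stated in the corollary.

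This argument is essentially bookkeeping: the only things to check are that $V(Y_{-1})$ does project onto the third column of $M^{n+1}(z)$ and that the positive scalar $\tfrac14$ does not disturb log-concavity. I do not expect any real obstacle at this step. The genuine difficulty lies downstream, in establishing log-concavity of that third-column sum as a polynomial in $z$ for all $n$; in view of the paper's title, this will presumably be accomplished by extracting a recurrence for the column-sum polynomials from the $3\times 3$ matrix $M(z)$ and proving real-rootedness (hence log-concavity of coefficients, by Newton's inequalities) is inductively preserved.
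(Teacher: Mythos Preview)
Your argument is correct and is exactly the intended derivation: the paper gives no explicit proof of this corollary, treating it as immediate from Corollary~\ref{cor:co1}, and the opening of Section~\ref{sec:char} confirms precisely your computation that the third-column sum of $M^n(z)$ equals $4\,\Gamma_{Y_{n-1}}(z)$. Your handling of the index shift and the harmless scalar $\tfrac14$ is fine.
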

\smallskip

\remark  Partitioned genus distributions and recursion systems for pgds were first used by Furst, Gross, and Statman \cite{FGS89}.  Stahl \cite{Stah97} was first to employ a matrix equivalent of a production system to investigate log-concavity.   A presentation by Mohar \cite{Mo12} has called more recent attention to the matrix equivalent and its properties.

\bigskip
\section{\large{Characterizing Genus Polynomials for Iterated Claws}\label{sec:char}}  

In this section, we investigate some properties of the genus polynomials of iterated claws.  Corollary \ref{cor:column3} leads us to focus on the sum of the third column of the matrix $M^{n}(z)$, which is expressible as $(1,1,1)M^n(z)(4V(Y_{-1}))$, which implies that it equals {\hbox{4 times} the genus polynomial} of the iterated claw $Y_{n-1}$.  Theorem \ref{th1} formulates a generating function $f(z,t)$ for this sequence of sums, and Theorem \ref{th2} uses the generating function to construct an expression for the genus polynomials from which we establish interlacing of roots in Section \ref{sec:rr}.
\smallskip

\begin{thm}\label{th1}
The generating function $f(z,t) = \sum_{n\geq0} (1,1,1)M^n(z)(4V(Y_{-1}))t^n$ for the sequence of sums of the third column of $M^n(z)$ has the closed form
\begin{equation} \label{gf1}
f(z,t) ~=~ \frac{1+(8-12z)t-24zt^2}{1-20zt+8z(8z-3)t^2+384z^3t^3}.
\end{equation}
\end{thm}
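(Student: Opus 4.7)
The plan is to recognize $f(z,t)$ as a scalar entry of the matrix resolvent $(I - tM(z))^{-1}$. Using the standard formal power series identity $\sum_{n\ge 0} M^n(z)\,t^n = (I-tM(z))^{-1}$, one rewrites
$$f(z,t) \,=\, (1,1,1)\,(I-tM(z))^{-1}\,\bigl(4V(Y_{-1})\bigr).$$
Since $4V(Y_{-1}) = (0,0,1)^T$, letting $w = (w_1,w_2,w_3)^T$ denote the unique solution of the $3\times 3$ linear system $(I-tM(z))\,w = (0,0,1)^T$ reduces the task to computing $f(z,t) = w_1+w_2+w_3$ from this system by ordinary Gaussian elimination.

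Unpacking the system using the explicit form of $M(z)$ yields
\begin{align*}
w_1 - 2t\,w_2 - 8t\,w_3 &= 0,\\
-12zt\,w_1 + (1-12zt)\,w_2 &= 0,\\
-4z^2 t\,w_1 - 2zt\,w_2 + (1-8zt)\,w_3 &= 1.
\end{align*}
The second equation immediately gives $w_2 = \frac{12zt}{1-12zt}\,w_1$; substituting into the first and isolating $w_1$ produces $w_1 = \frac{8t(1-12zt)}{1-12zt-24zt^2}\,w_3$ and thence $w_2 = \frac{96zt^2}{1-12zt-24zt^2}\,w_3$. On adding the three components, the $96zt^2$ terms telescope and one obtains
$$w_1+w_2+w_3 \,=\, \frac{1+(8-12z)t-24zt^2}{1-12zt-24zt^2}\,w_3,$$
which already exhibits the numerator of the claimed closed form \eqref{gf1}.

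The third equation then determines $w_3$. Substituting the expressions for $w_1$ and $w_2$ into it and clearing the common denominator $1-12zt-24zt^2$ produces
$$w_3 \,=\, \frac{1-12zt-24zt^2}{1-20zt+8z(8z-3)t^2+384z^3t^3},$$
whose denominator is exactly $\det(I-tM(z))$ — a welcome sanity check that can be verified independently by cofactor expansion of the $3\times 3$ determinant. The factor $1-12zt-24zt^2$ then cancels between this expression for $w_3$ and the summation formula for $w_1+w_2+w_3$, leaving precisely \eqref{gf1}.

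The main obstacle is purely the polynomial bookkeeping: one must check that the auxiliary factor $1-12zt-24zt^2$ cancels exactly, and that the cubic polynomial assembled from the third equation — after combining $-32z^2t^2(1-12zt) - 192z^2t^3 + (1-8zt)(1-12zt-24zt^2)$ — collapses to the announced denominator with the coefficients $-20z$, $64z^2-24z = 8z(8z-3)$, and $384z^3$ in place. There is no conceptual difficulty beyond these verifications, and as an alternative one can bypass the elimination by invoking Cramer's rule directly, writing the numerator as the signed sum of three $2\times 2$ minors of $I - tM(z)$ and the denominator as $\det(I-tM(z))$.
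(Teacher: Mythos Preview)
Your proof is correct and takes a genuinely different route from the paper's. The paper proceeds by writing $(p_n,q_n,r_n)=(1,1,1)M^n(z)$, uses the one-step recursion $(p_{n+1},q_{n+1},r_{n+1})=(p_n,q_n,r_n)M(z)$ to express $p_n$ and $q_n$ in terms of shifted $r_n$'s, and then eliminates them to obtain the scalar third-order recurrence
\[
r_n \,=\, 20z\,r_{n-1} + 8z(3-8z)\,r_{n-2} - 384z^3\,r_{n-3},
\]
from which the generating function~\eqref{gf1} is read off after computing the three initial values $r_0,r_1,r_2$. By contrast, you go straight to the resolvent identity $\sum_{n\ge 0}M^n t^n=(I-tM)^{-1}$ and solve a $3\times 3$ linear system once, which is slicker and avoids any recurrence manipulation or initial-value bookkeeping. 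The trade-off is that the paper's route produces the recurrence for $r_n$ as an intermediate result, and this recurrence is reused later (it becomes Recurrence~\eqref{rec:Gamma} for $\Gamma_{Y_n}(z)$, which drives all of \S\ref{sec:char} and \S\ref{sec:rr}); in your approach that recurrence would have to be recovered separately by reading it off from the denominator $\det(I-tM(z))$ via the Cayley--Hamilton theorem.
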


\begin{proof}
Let $(p_n,q_n,rq_n) = (1,1,1)M^n$ for all $n\geq0$. Then
\begin{align}
(p_{n+1},q_{n+1},r_{n+1})&=(p_n,q_n,r_n)M(z) \label{eq1}\\
&=(12zq_n+4z^2r_n,\ 2p_n+12zq_n+2zr_n,\ 8p_n+8zr_n). \notag
\end{align}
The third coordinate of Equation \eqref{eq1} implies that
\begin{align}
p_n \,=\, \frac{1}{8}(r_{n+1}-8zr_n).\label{eq2}
\end{align}
By combining \eqref{eq2} with the first coordinate of (\ref{eq1}) we obtain
\begin{align}
q_n \,=\, \frac{1}{96z}(r_{n+2}-8zr_{n+1}-32z^2r_n).\label{eq3}
\end{align}
The second coordinate of (\ref{eq1}) yields
\begin{equation} \label{eq:b}
q_{n+1} \,=\, 2p_n+12zq_n+2zr_n
\end{equation}
Substituting (\ref{eq2}) and (\ref{eq3}) into \eqref{eq:b} leads to the recurrence relation
\begin{equation}\label{rec:C}
r_{n} \,=\, 20zr_{n-1}+8z(3-8z)r_{n-2}-384z^3r_{n-3}
\end{equation}
with
\begin{equation}\label{ini:C}
\begin{aligned}
r_0& \,=\, 1,\\
r_1& \,=\, 8+8z,\\
r_2& \,=\, 160z+96z^2.
\end{aligned}
\end{equation}
By multiplying Recurrence \eqref{rec:C} by $t^n$ and summing over all $n\geq0$, we obtain Generating Function \eqref{gf1}.
\end{proof}
\smallskip

\begin{thm}\label{th2}
The genus polynomial of the iterated claw $Y_n$ is given by
\begin{align*}
\noalign{\vskip-3pt}
(1,1,1)M^{n+1}(z)V(Y_{-1}) ~=~ 2^{n-1}(h_{n+1}(z)+2(2-3z)h_{n}(z)-6zh_{n-1}(z)), \\[-24pt]
\end{align*}
where
\begin{align*}
h_{n}(z)&\quad=\sum_{2j+i_1+i_2+i_3=n}\binom{j+i_1}{i_1}\binom{j+i_2}{i_2}\binom{j+i_3}{i_3}(1+\sqrt{3})^{i_2}(1-\sqrt{3})^{i_3}3^{j+i_1}(2z)^{n-j}.
\end{align*}
\end{thm}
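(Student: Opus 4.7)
The plan is to identify both sides of the identity as coefficients of a common generating function. Let $(p_n,q_n,r_n):=(1,1,1)M^n(z)$ as in the proof of Theorem~\ref{th1}. Since $V(Y_{-1})$ has only one nonzero entry, namely $1/4$ in the third coordinate, the left-hand side of the theorem equals $r_{n+1}/4$. By Theorem~\ref{th1}, $\sum_{n\ge 0}r_n t^n = f(z,t)$ is given explicitly in \eqref{gf1}, so it suffices to compute $H(z,t):=\sum_{n\ge 0}h_n(z)t^n$ in closed form and to verify the factorization $f(z,t) = (1+(8-12z)t-24zt^2)\,H(z,2t)$. Extracting $[t^n]$ from both sides of this factorization then yields $r_n = 2^n\bigl(h_n + 2(2-3z)h_{n-1} - 6z h_{n-2}\bigr)$ (with the convention $h_{-1}=h_{-2}=0$), and the theorem follows by shifting $n\mapsto n+1$ and dividing by $4$.

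To compute $H(z,t)$, I would interchange summations in the definition of $h_n(z)$ and apply the negative binomial identity $\sum_{i\ge 0}\binom{j+i}{i}x^i = (1-x)^{-j-1}$ to each inner sum over $i_k$, obtaining
\begin{align*}
H(z,t) \,=\, \sum_{j\ge 0}\frac{(6zt^2)^j}{(1-6zt)^{j+1}\bigl(1-2(1+\sqrt{3})zt\bigr)^{j+1}\bigl(1-2(1-\sqrt{3})zt\bigr)^{j+1}}.
\end{align*}
The product of the two conjugate factors $\bigl(1-2(1+\sqrt{3})zt\bigr)\bigl(1-2(1-\sqrt{3})zt\bigr)$ equals $1-4zt-8z^2t^2$, eliminating the radicals, and the geometric series in $j$ then collapses to the closed form
\begin{align*}
H(z,t) \,=\, \frac{1}{1-10zt+(16z^2-6z)t^2+48z^3t^3}.
\end{align*}

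The factorization step is now a coincidence of denominators: substituting $t\mapsto 2t$ turns the denominator of $H(z,t)$ into $1-20zt+8z(8z-3)t^2+384z^3t^3$, which is exactly the denominator of $f(z,t)$ in \eqref{gf1}. Hence $f(z,t)/H(z,2t)$ is a polynomial, and reading off the numerator of \eqref{gf1} shows that this polynomial is $1+(8-12z)t-24zt^2$, as required. The main obstacle in this plan is the derivation of $H(z,t)$: one must correctly interchange summations, apply the negative binomial identity, and verify that the $\sqrt{3}$ terms cancel so that $H(z,t)$ is rational in $z$ and $t$. Once $H(z,t)$ is in hand, matching denominators and reading off numerators is purely mechanical, and the convention $h_{-1}=h_{-2}=0$ makes the cases $n=0,1$ consistent with $r_0=1$ and $r_1=8+8z$ from \eqref{ini:C}.
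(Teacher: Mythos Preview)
Your proof is correct and follows essentially the same route as the paper's own argument: both connect $f(z,t)$ to the $h_n$'s by factoring the cubic denominator as $(1-6zt)(1-2(1+\sqrt3)zt)(1-2(1-\sqrt3)zt)-6zt^2$ (after rescaling), expanding as a geometric series in $j$, and invoking the negative binomial identity. The only difference is direction: the paper starts from $f(z,t)$, substitutes $(z,t)\mapsto(z/2,t/2)$, and reads off the definition of $h_n$, whereas you start from the given $h_n$, sum to obtain $H(z,t)$ in closed form, and then match $H(z,2t)$ against $f(z,t)$; your version is slightly cleaner in that it avoids the somewhat awkward $M^n(z/2)V(Y_0)$ intermediate step.
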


\begin{proof}
By Theorem \ref{th1}, we have
$$f(z,t) ~=~ \sum_{n\geq0}(1,1,1)M^n(4V(Y_0))t^n ~=~ \frac{1+(8-12z)t-24zt^2}{1-20zt+8z(8z-3)t^2+384z^3t^3}.$$
Thus,
\begin{align*}
f(z/2,t/2)&~=~ \frac{1+(4-3z)t-3zt^2}{1-5zt+z(4z-3)t^2+6z^3t^3}\\
&~=~ \frac{1+(4-3z)t-3zt^2}{(1-2zt-2z^2t^2)(1-3zt)-3zt^2}\\
&~=~ \sum_{j\geq0}\frac{(1+(4-3z)t-3zt^2)3^jz^jt^{2j}}{(1-3zt)^{j+1}(1+\sqrt{3}zt)^{j+1}(1-\sqrt{3}zt)^{j+1}}.
\end{align*}
Using the combinatorial identity $(1-at)^{-m}=\sum_{j\geq0}\binom{m-1+j}{j}a^jt^j$, and then finding the coefficient of $t^n$, we derive the equation
\begin{align*}
(1,1,1)M^n(z/2)V(Y_0) ~=~ 2^{n-2}(h_{n}(z)+2(2-3z)h_{n-1}(z)-6zh_{n-2}(z)),
\end{align*}
which, by Corollary \ref{cor:co1}, completes the proof.
\end{proof}
\smallskip

Theorem \ref{th2} provides an explicit expression for the genus polynomial $\Gamma_{Y_n}(z)$.
It is easy to see that $\Gamma_{Y_n}(z)=r_{n+1}/4$,
where $r_n$ is defined in the proof of Theorem~\ref{th1}.
In terms of $\Gamma_{Y_n}(z)$,
the recurrence relation~\eqref{rec:C} becomes
\begin{equation}\label{rec:Gamma}
\Gamma_{Y_n}(z) \,=\, 20z\Gamma_{Y_{n-1}}(z)+8z(3-8z)\Gamma_{Y_{n-2}}(z)-384z^3\Gamma_{Y_{n-3}}(z).
\end{equation}

Let $g_{n,i}$ be the coefficient of $z^i$ in $\Gamma_{Y_n}(z)$.
The following table of values of $g_{n,i}$ for $n\le4$ is derived in \cite{GKP12}.
\begin{center}
\begin{tabular}{|c|cccccc|}
\hline
$g_{n,i}$ & $i=0${\vphantom{$2^{M^M}$}} & 1 &\ ~2~\ &\ ~3~\ &\ ~4~\ &\ ~5~\ \\[2pt]
\hline
$n=0$ {\vphantom{$2^{M^M}$}}& 2 & 2 & 0  & 0 & 0 &0 \\
1 & 0 & 40 & 24 & 0 & 0 & 0 \\
2 & 0 & 48 & 720 & 256 & 0 & 0 \\
3 & 0 & 0  & 1920 & 11648 & 2816 & 0 \\
4 & 0 & 0 & 1152 & 52608  &  177664  & 30720 \\
\hline
\end{tabular}
\end{center}

Denote by $\mathcal{P}_{s,t}$ the set of polynomials of the form $\sum_{k=s}^ta_kz^k$,
where $a_k$ is a positive integer for any $s\le k\le t$.
The above table suggests that $\Gamma_{Y_n}(z)\in\mathcal{P}_{\lfloor (n+1)/2\rfloor,\,n+1}$
for $n\le4$. Now we show it holds true in general.

\smallskip

\begin{thm}\label{thm:C}
For all $n\ge0$, the polynomial $\Gamma_{Y_n}(z)\in\mathcal{P}_{\lfloor (n+1)/2\rfloor,\,n+1}$.
Moreover, we have the leading coefficient
\begin{equation}\label{exp:n:n+1}
g_{n,n+1} \,=\, 4^{n}\sum_{k=0}^{\lfloor(n+1)/2\rfloor}\binom{n+2}{2k+1}3^k,
\end{equation}
and, for any number $i$ such that $\lfloor (n+1)/2\rfloor+1 \,\le\, i\le n$, we have
\begin{equation}\label{ineq:11}
g_{n,i} \,>\, 11g_{n-1,i-1}.
\end{equation}
\end{thm}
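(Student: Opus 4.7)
The plan is strong induction on $n$, with base cases $n\le 4$ read from the table, and inductive steps driven by the integer recurrence
\begin{equation*}
g_{n,i} \;=\; 20\,g_{n-1,i-1} + 24\,g_{n-2,i-1} - 64\,g_{n-2,i-2} - 384\,g_{n-3,i-3}
\end{equation*}
obtained by extracting the coefficient of $z^i$ from \eqref{rec:Gamma}. The support claim $\Gamma_{Y_n}\in\mathcal P_{\lfloor(n+1)/2\rfloor,\,n+1}$ splits into three parts: vanishing of $g_{n,i}$ outside $[\lfloor(n+1)/2\rfloor,n+1]$ is immediate, because the inductive support bounds at levels $n-1,n-2,n-3$ already force each of the four terms on the right to be zero; positivity of the minimum coefficient reduces, by parity, to the collapsed identities $g_{2k,k}=24g_{2k-2,k-1}$ and $g_{2k+1,k+1}=20g_{2k,k}+24g_{2k-1,k}$ (the other summands dropping by support), which close by induction; and positivity of the intermediate coefficients comes as a free byproduct of the inequality $g_{n,i}>11g_{n-1,i-1}>0$ proved at the end.

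For the leading coefficient $g_{n,n+1}$, I plan to recognize the claimed sum via the binomial theorem as
\begin{equation*}
\sigma_n \;:=\; \sum_{k=0}^{\lfloor (n+1)/2\rfloor}\binom{n+2}{2k+1}3^k \;=\; \frac{(1+\sqrt{3})^{n+2}-(1-\sqrt{3})^{n+2}}{2\sqrt{3}},
\end{equation*}
so that $\sigma_n = 2\sigma_{n-1}+2\sigma_{n-2}$ from the minimal polynomial of $1\pm\sqrt{3}$. Rescaling, $T_n := 4^n\sigma_n$ satisfies $T_n = 8T_{n-1}+32T_{n-2}$. Since $x^2-8x-32$ divides $x^3-20x^2+64x+384$, the sequence $T_n$ automatically satisfies the order-three recurrence that $g_{n,n+1}$ inherits from \eqref{rec:Gamma} after noting $g_{n-2,n}=0$. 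Matching two initial values then pins down $g_{n,n+1}=T_n=4^n\sigma_n$.

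The heart of the argument is the inequality $g_{n,i}>11g_{n-1,i-1}$. In the generic subrange where induction supplies both $g_{n-1,i-1}>11g_{n-2,i-2}$ and $g_{n-2,i-2}>11g_{n-3,i-3}$, dropping the positive term $24g_{n-2,i-1}$ from the recurrence yields
\begin{equation*}
g_{n,i} \;>\; \Bigl(20 - \tfrac{64}{11} - \tfrac{384}{121}\Bigr)g_{n-1,i-1} \;=\; \tfrac{1332}{121}\,g_{n-1,i-1} \;>\; 11\,g_{n-1,i-1},
\end{equation*}
so the generic case closes with the thin but sufficient margin $1332/121 = 11+1/121$. One boundary value of $i$ remains in each parity class: $i=k+1$ for $n=2k$, and $i=k+2$ for $n=2k+1$. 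When $n=2k$, the term $g_{n-3,i-3}$ drops by support and the needed bound reduces to $9g_{2k-1,k}+24g_{2k-2,k}>64g_{2k-2,k-1}$, which follows from the minimum-coefficient identity $g_{2k-1,k}=20g_{2k-2,k-1}+24g_{2k-3,k-1}$; when $n=2k+1$, the already-established boundary case at level $n-1=2k$ gives $g_{2k,k+1}>11g_{2k-1,k}$, and combining with $g_{2k-1,k}>20g_{2k-2,k-1}$ leaves ample slack to absorb the remaining $-384g_{2k-2,k-1}$ contribution. I expect this low-index boundary bookkeeping to be the main obstacle, precisely because the generic margin $1/121$ is so thin that the induction must carefully track both which terms vanish and which minimum-coefficient identities are available.
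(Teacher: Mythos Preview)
Your approach is essentially the paper's: the same coefficient recurrence, the same support-and-boundary bookkeeping for the minimum genus, and the same two-step induction for the inequality (your $1332/121$ bound is exactly the paper's chain $9g_{n-1,i-1}>99g_{n-2,i-2}$ then $35g_{n-2,i-2}>385g_{n-3,i-3}$, rewritten). Two differences are worth noting. First, your leading-coefficient argument via the closed form $\sigma_n=\bigl((1+\sqrt3)^{n+2}-(1-\sqrt3)^{n+2}\bigr)/(2\sqrt3)$ and the factorization $x^3-20x^2+64x+384=(x^2-8x-32)(x-12)$ is more self-contained than the paper's, which simply invokes Maple on the order-three recurrence. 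Second, you are more careful than the paper about the low-$i$ boundary of the inequality: the paper's displayed chain tacitly uses $g_{n-1,i-1}>11g_{n-2,i-2}$ even at $i=n/2+1$ (for $n$ even), where that instance lies outside the stated range of the inductive hypothesis; your explicit treatment via the minimum-coefficient identity $g_{2k-1,k}=20g_{2k-2,k-1}+24g_{2k-3,k-1}$ closes that gap cleanly.

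One small slip: after observing that $T_n$ satisfies the order-three recurrence, you write that ``matching two initial values'' suffices. It does not---the solution space of the order-three recurrence is three-dimensional, so you must check $g_{n,n+1}=T_n$ for $n=0,1,2$ (all three hold: $2,24,256$). Alternatively, verify directly that $s_2=8s_1+32s_0$, so that $s_n$ itself obeys the order-two recurrence, after which two initial values do suffice.
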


\begin{proof}
We see in the table above that $\Gamma_{Y_n}(z)\in\mathcal{P}_{\lfloor (n+1)/2\rfloor,\,n+1}$ and that Equation~\eqref{exp:n:n+1} and Inequality~\eqref{ineq:11} are true, for $n\le 4$.  Now suppose that $n\ge5$.
For convenience, let $g_{k,i}=0$ for all $i<0$.
We can also take $g_{k,i}=0$ for $i>k+1$, by induction using~\eqref{rec:Gamma}, for $k<n$.
From Recurrence~(\ref{rec:Gamma}) and the induction hypothesis, we have
\begin{equation}\label{rec:i}
g_{n,i} \,=\, 20g_{n-1,i-1}+24g_{n-2,i-1}-64g_{n-2,i-2}-384g_{n-3,i-3},
\qquad n\ge3.
\end{equation}

For $i>n+1$, the induction hypothesis implies that each of the four terms on the right side of
Recurrence \eqref{rec:i} is zero-valued.  So the degree of $\Gamma_{Y_n}(z)$ is at most $n+1$.
Let $s_i \,=\, g_{i,i+1}$. Taking $i=n+1$ in~(\ref{rec:i}), we get
\begin{equation}\label{rec:s}
s_n \,=\, 20s_{n-1}-64s_{n-2}-384s_{n-3},•
\end{equation} 
with the initial values $s_0=2$, $s_1=24$, $s_2=256$.
The above recurrence can be solved by a standard generating function method,
see~\cite[p.8]{Wilf06}.
In practice, 
we use the command {\tt rsolve} in the software Maple 
and get the explicit formula directly as 
\[
s_n \,=\, 4^{n}\sum_{k\ge0}\binom{n+2}{2k+1}3^k.
\]
It follows that $g_{n,n+1}>0$. Hence the degree of $\Gamma_{Y_n}(z)$ is exactly $n+1$.

\enlargethispage{12pt}

Similarly, for $i<\lfloor (n+1)/2\rfloor$, the four terms on the right side of \eqref{rec:i} are zero-valued, so the minimum genus of $Y_n$ is at least $\lfloor (n+1)/2\rfloor$.  Moreover, applying \eqref{rec:i} with $i=\lfloor (n+1)/2\rfloor$ and using the induction hypothesis $g_{k,i}=0$ for all $i<\lfloor (k+1)/2\rfloor$ with $k<n$, we find the first term is positive for $n$ odd and zero for $n$ even,
the second term is always positive, and the third and fourth terms are always zero.
In other words,
\[
g_{n,\lfloor (n+1)/2\rfloor} \,=\, 20g_{n-1,\lfloor (n+1)/2\rfloor-1}+24g_{n-2,\lfloor (n+1)/2\rfloor-1}\ge24g_{n-2,\lfloor (n+1)/2\rfloor-1}>0.
\]
This confirms the minimum genus of $Y_n$ is exactly $\lfloor (n+1)/2\rfloor$.

Now consider $i$ such that $\lfloor (n+1)/2\rfloor+1 \,\le\, i\le n$.  By~(\ref{rec:i}), and using~\eqref{ineq:11} inductively, we deduce
\begin{align*}
g_{n,i}\,
& =11g_{n-1,i-1}+24g_{n-2,i-1}+(9g_{n-1,i-1}-64g_{n-2,i-2}-384g_{n-3,i-3})\\
&>11g_{n-1,i-1}+24g_{n-2,i-1}+(35g_{n-2,i-2}-384g_{n-3,i-3})\\
&>11g_{n-1,i-1}+24g_{n-2,i-1}+g_{n-3,i-3}\\
&\ge11g_{n-1,i-1}.
\end{align*}
So Inequality~(\ref{ineq:11}) holds true.
It follows that $g_{n,i}>0$. Hence $$\Gamma_{Y_n}(z)\in\mathcal{P}_{\lfloor (n+1)/2\rfloor,n+1}.$$
This completes the proof.
\end{proof}
\smallskip

\begin{cor} \label{cor:interp}
The sequence of coefficients of $\Gamma_{Y_n}(z)$ has no internal zeros.
\end{cor}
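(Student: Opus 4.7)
The plan is brief because the corollary falls out directly from Theorem \ref{thm:C}. First I would unpack the definition of $\mathcal{P}_{s,t}$: a polynomial lies in $\mathcal{P}_{s,t}$ precisely when every coefficient of $z^k$ for $s\le k\le t$ is a \emph{positive} integer. Hence the statement $\Gamma_{Y_n}(z)\in\mathcal{P}_{\lfloor(n+1)/2\rfloor,\,n+1}$ supplied by Theorem \ref{thm:C} already guarantees that $g_{n,i}>0$ for every index $i$ in the range $\lfloor(n+1)/2\rfloor\le i\le n+1$.

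Next, I would note that the same theorem, together with the degree and minimum-genus bounds established along the way in its proof (namely that the degree of $\Gamma_{Y_n}(z)$ is exactly $n+1$ and its lowest-degree term is exactly $z^{\lfloor(n+1)/2\rfloor}$), forces $g_{n,i}=0$ for all indices $i$ outside this range. Thus the nonzero coefficients of $\Gamma_{Y_n}(z)$ occupy a contiguous block of consecutive indices, which is exactly the statement that the coefficient sequence has no internal zeros. I do not expect a genuine obstacle: the substantive inductive work — extracting the recurrence \eqref{rec:i} from \eqref{rec:Gamma} and propagating positivity through it — was already carried out in the proof of Theorem \ref{thm:C}, and Corollary \ref{cor:interp} is essentially just its definitional rephrasing.
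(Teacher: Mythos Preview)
Your proposal is correct and matches the paper's own proof, which simply notes that the corollary ``follows directly from Theorem~\ref{thm:C}''; you have merely spelled out that unpacking in more detail. One small efficiency: the membership $\Gamma_{Y_n}(z)\in\mathcal{P}_{\lfloor(n+1)/2\rfloor,\,n+1}$ already forces $g_{n,i}=0$ outside the range by the very definition of $\mathcal{P}_{s,t}$, so you need not reach separately into the proof of Theorem~\ref{thm:C} for the degree and minimum-genus bounds.
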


\begin{proof}\vsb
This special case of the familiar and easily proved ``interpolation theorem'' of topological graph theory also follows directly from Theorem \ref{thm:C}.
\end{proof}

\bigskip
\section{\large{Genus Polynomials for Iterated Claws are Real-Rooted}\label{sec:rr}}  

Our goal in this section is to establish in Theorem \ref{thm:RR} the real-rootedness of the genus polynomials $\Gamma_{Y_n}(z)$ of the iterated claws, via an associated sequence $W_n(z)$ of normalized polynomials.  It follows from this real-rootedness, by Newton's theorem (e.g., see \cite{Stan89}, Theorem 2), that the genus polynomials for iterated claws are log-concave.

To proceed, we ``normalize'' the polynomials $\Gamma_{Y_n}(z)$ by defining
\begin{equation}\label{def:W}
W_n(z) ~=~ z^{-\lfloor (n+1)/2\rfloor}\Gamma_{Y_n}(z),
\end{equation}
so that $W_n(z)$  starts from a non-zero constant term,
and has the same non-zero roots as $\Gamma_{Y_n}(z)$.
We use the symbol $d_n$ to denote the degree of $W_n(z)$, that is,
\begin{equation}
 d_n \,=\, \deg{W_n(z)} \,=\, (n+1)-\llfloor \frac{n+1}{2}\rrfloor \,=\, \llceil \frac{n+1}{2}\rrceil.
\end{equation}
By Theorem~\ref{thm:C}, we have
$W_n(z)\in\mathcal{P}_{0,d_n}$.
Substituting~(\ref{def:W}) into the recurrence relation~(\ref{rec:Gamma}),
we derive
\begin{equation}\label{rec:W}
W_n(z) ~=~ \begin{cases}
20zW_{n-1}(z)+8(3-8z)W_{n-2}(z)-384z^2W_{n-3}(z),&\text{if $n$ is even},\\
20W_{n-1}(z)+8(3-8z)W_{n-2}(z)-384zW_{n-3}(z),&\text{if $n$ is odd},
\end{cases}
\end{equation}
with the initial polynomials
\begin{equation}\label{ini:W}
\begin{aligned}
W_0(z)&=2(1+z),\\
W_1(z)&=8(5+3z),\\
W_2(z)&=16(3+45z+16z^2).
\end{aligned}
\end{equation}

Let $\mathcal{P}$ denote the union $\cup_{n\ge0}\mathcal{P}_{0,n}
=\cup_{n\ge0}\{\sum_{k=0}^na_kz^k\,|\,a_k\in\mathbb{Z}^+\}$.
Lemma \ref{lem:interlacing} is an elementary consequence of the
intermediate value theorem.

\begin{lem}\label{lem:interlacing}
Let $P(x),Q(x)\in\mathcal{P}$.
Suppose that $P(x)$ has roots $x_1<x_2<\cdots<x_{\deg P}$,
and that $Q(x)$ has roots $y_1<y_2<\cdots<y_{\deg Q}$.
If $\deg Q-\deg P\in\{0,1\}$ and if the roots interlace so that 
$$x_1\,<\, y_1 \,<\, x_2 \,<\, y_2 \,<\, \cdots, $$
then
\begin{align}
(-1)^{i+\deg{P}}P(y_i)>0&\qquad\text{for all $1\le i\le \deg{Q}$},\label{lem:interlacing:P}\\
(-1)^{j+\deg{Q}}Q(x_j)<0&\qquad\text{for all $1\le j\le \deg{P}$}.\label{lem:interlacing:Q}
\end{align}
\end{lem}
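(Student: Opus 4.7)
The plan is a direct sign-chase exploiting two features of $\mathcal{P}$. First, every $R \in \mathcal{P}$ has strictly positive coefficients, hence a positive leading coefficient, so $R(x) > 0$ for every $x$ larger than its largest root, and in particular every real root of $R$ lies in $(-\infty, 0)$. Second, since the roots listed in the hypothesis are simple, $R$ changes sign at each one. Combining these observations: if $R$ has roots $r_1 < r_2 < \cdots < r_d$ and we set $r_0 = -\infty$ and $r_{d+1} = +\infty$, then on the open interval $(r_k, r_{k+1})$ the sign of $R$ equals $(-1)^{d-k}$, because exactly $d-k$ of the linear factors $(x - r_j)$ are negative there. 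This completely describes the sign pattern of both $P$ and $Q$.

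The second step is to use the interlacing hypothesis to locate each $y_i$ inside a single interval $(x_{k(i)}, x_{k(i)+1})$ and each $x_j$ inside a single interval $(y_{\ell(j)}, y_{\ell(j)+1})$, and then simply read off the signs. In the balanced case $\deg P = \deg Q = n$, the interlacing $x_1 < y_1 < x_2 < y_2 < \cdots < x_n < y_n$ forces $y_i \in (x_i, x_{i+1})$ for every $1 \le i \le n$, so the first step tells us that $P(y_i)$ has sign $(-1)^{n-i}$; consequently $(-1)^{i+\deg P} P(y_i)$ has sign $(-1)^{2n} = +1$, which proves~(\ref{lem:interlacing:P}). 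Symmetrically, the interlacing forces $x_j \in (y_{j-1}, y_j)$ for every $1 \le j \le n$ (with $y_0 = -\infty$), so $Q(x_j)$ has sign $(-1)^{n-(j-1)}$, and $(-1)^{j+\deg Q} Q(x_j)$ has sign $(-1)^{2n+1} = -1$, which proves~(\ref{lem:interlacing:Q}).

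The case $\deg Q = \deg P + 1$ is handled by the same argument once the single extra root of $Q$ is inserted into the interlacing at the appropriate end; the parity arithmetic is then updated with no new idea required. The main obstacle is therefore purely notational bookkeeping, namely verifying in each sub-case that the interval into which the evaluation point falls carries the same sign of $P$ (respectively $Q$) as the one claimed by the formulas $(-1)^{i+\deg P}$ and $(-1)^{j+\deg Q}$. There is no analytic content beyond the intermediate value theorem for continuous functions, which is what guarantees the sign of $R$ on each $(r_k, r_{k+1})$ in the first place.
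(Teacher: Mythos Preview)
Your proof is correct and follows essentially the same approach as the paper's: both arguments track the sign of $P$ (respectively $Q$) between consecutive roots and then use the interlacing hypothesis to locate each evaluation point in the correct interval. The only cosmetic difference is that the paper phrases the sign-tracking as ``tracing the curve from $-\infty$'' and splitting on the parity of $\deg P$, whereas you read the sign directly from the factorization $P(x)=c\prod_j(x-x_j)$; the content is identical.
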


\begin{proof}
Since $P(x)$ is a polynomial with positive coefficients, we have
\begin{equation}\label{sgn:lem:-infty}
	(-1)^{\deg{P}}P(-\infty)>0.
\end{equation}
We suppose first that $\deg{P(x)}$ is odd, and we consider the curve~$P(x)$.  We see that Inequality~(\ref{sgn:lem:-infty}) reduces to $P(-\infty)<0$.  Thus, the curve~$P(x)$ starts in the lower half plane and intersects the $x$-axis at its first root,~$x_1$.  From there, the curve~$P(x)$ proceeds without going below the $x$-axis, until it meets the second root,~$x_2$. Since $x_1<y_1<x_2$, we recognize that~(\ref{lem:interlacing:P}) holds for $i=1$, i.e., 
\begin{equation}\label{sgn:lem:y1}
	P(y_1)>0.
\end{equation}
After passing through~$x_2$, the curve $P(x)$ stays below the $x$-axis up to the third root,~$x_3$.  It is clear that the curve $P(x)$ continues going forward, intersecting the $x$-axis in this alternating way.  It follows from this alternation that 
\begin{equation}\label{sgn:lem:PP<0}
P(y_k)P(y_{k+1})<0\qquad\text{for all $1\le k\le \deg Q-1$}.
\end{equation}
From~(\ref{sgn:lem:y1}) and ~\eqref{sgn:lem:PP<0}, we conclude that~(\ref{lem:interlacing:P}) holds for all $1\le i\le \deg{Q}$, when $\deg{P(x)}$ is odd. 

We next suppose that $\deg{P(x)}$ is even.  In this case, we can draw the curve $P(x)$ so that it starts in the upper half plane, first intersects the $x$-axis at $x_1$, then goes below the axis up to $x_2$, and continues alternatingly.  Therefore the sign-alternating relation~(\ref{sgn:lem:PP<0}) still holds.  Since $P(y_1)<0$ when $\deg{P(x)}$ is even, we have proved (\ref{lem:interlacing:P}).

It is obvious that Inequality~(\ref{lem:interlacing:Q}) can be shown
along the same line. This completes the proof of Lemma \ref{lem:interlacing}.
\end{proof}
\smallskip

Now we proceed with our main theorem on the genus polynomial of iterated claws.

\begin{thm}\label{thm:RR}
For every $n\ge0$, the polynomial $W_n(z)$ is real-rooted.  Moreover, if the roots of $W_k(z)$ are denoted by $x_{k,1}<x_{k,2}<\cdots$, then we have the following interlacing properties:
\begin{itemize}
\item[(i)]
for every $n\ge2$, the polynomial $W_n(z)$ has one more root than $W_{n-2}(z)$, and the roots interlace so that \vsa\vsb
$$
\, x_{n,1} \,<\, x_{n-2,1} \,<\, x_{n,2} \,<\, x_{n-2,2} \,<\, \cdots \,<\, x_{n,d_n-1} \,<\, x_{n-2,d_n-1} \,<\, x_{n,d_n};
$$
\item[(ii)]
for every $n\ge1$, the polynomial $W_n(z)$ has either one more (when $n$ is even) or the same number (when $n$ is odd) of roots as $W_{n-1}(z)$, and the roots interlace so that \vsa\vsb
$$\qquad \ \ \ x_{n,1} \,<\, x_{n-1,1} \,<\, x_{n,2} \,<\, x_{n-1,2} \,<\, \cdots \,<\, x_{n-1,d_n-1} \,<\, x_{n,d_n}\quad\text{when $n$ even};$$ \vskip-3pt 
\noi and \vsa\vsb
$$\qquad\qquad x_{n,1} \,<\, x_{n-1,1} \,<\, x_{n,2} \,<\, x_{n-1,2} \,<\, \cdots \,<\, x_{n,d_n} \,<\, x_{n-1,d_n} \quad\text{when $n$ odd}. \hskip40pt $$

\end{itemize}
\end{thm}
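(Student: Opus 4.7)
The plan is to establish real-rootedness together with both interlacing properties~(i) and~(ii) simultaneously, by strong induction on $n$. Since every $W_n(z)$ has strictly positive coefficients (by Theorem~\ref{thm:C} via~\eqref{def:W}), all real roots of $W_n$ are strictly negative, and the sign of $W_n(-\infty)$ equals $(-1)^{d_n}$; these asymptotic signs, together with $W_n(0)>0$, will suffice to pin down the extreme roots. The base cases $n \le 2$ are verified directly from~\eqref{ini:W}.

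For the inductive step ($n \ge 3$), the key move is to substitute each root $x_{n-1, j}$ of $W_{n-1}$ into the recurrence~\eqref{rec:W}. The first term $20 z^{\varepsilon}\, W_{n-1}$ (with $\varepsilon \in \{0,1\}$ according to the parity of $n$) vanishes, leaving the two-term expression
$$
W_n(x_{n-1, j}) \;=\; 8\bigl(3 - 8 x_{n-1, j}\bigr)\, W_{n-2}(x_{n-1, j}) \;-\; 384\, x_{n-1, j}^{\kappa}\, W_{n-3}(x_{n-1, j}),
$$
where $\kappa = 1$ if $n$ is odd and $\kappa = 2$ if $n$ is even. Since $x_{n-1, j} < 0$, the scalar $8(3 - 8 x_{n-1, j})$ is strictly positive, and $-384\, x_{n-1, j}^{\kappa}$ has a sign depending only on the parity of $\kappa$. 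The induction hypothesis provides the two interlacings $W_{n-1}\leftrightarrow W_{n-2}$ (from part~(ii) at index $n-1$) and $W_{n-1}\leftrightarrow W_{n-3}$ (from part~(i) at index $n-1$), so Lemma~\ref{lem:interlacing} pins down the signs of $W_{n-2}(x_{n-1, j})$ and $W_{n-3}(x_{n-1, j})$ as strictly alternating in $j$. A parity-by-parity sign check shows that in both cases the two surviving terms reinforce rather than cancel, so $W_n$ takes strictly alternating signs at the consecutive roots of $W_{n-1}$. This yields $d_{n-1} - 1$ interior roots of $W_n$ interlaced with those of $W_{n-1}$; comparing the signs of $W_n$ at $z = 0$ and at $z = -\infty$ with these patterns places the remaining root(s) and produces the precise interlacing required by~(ii). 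A parallel argument, substituting the roots $x_{n-2, j}$ of $W_{n-2}$ into~\eqref{rec:W} so as to kill the middle term, uses part~(ii) at both indices $n-1$ and $n-2$ of the induction hypothesis to establish~(i).

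The principal obstacle lies in this sign-reinforcement step. Unlike the classical Sturm situation of a three-term recurrence, where killing one term leaves a single monomial whose sign is transparent, the four-term recurrence~\eqref{rec:W} still leaves two competing terms after killing $W_{n-1}$ or $W_{n-2}$, and both must push $W_n$ in the same direction at every relevant root. Verifying this requires joint control of $W_{n-2}(x_{n-1, j})$ and $W_{n-3}(x_{n-1, j})$ (or of $W_{n-1}(x_{n-2, j})$ and $W_{n-3}(x_{n-2, j})$), combining parts~(i) and~(ii) of the induction hypothesis through two applications of Lemma~\ref{lem:interlacing}. The parity split in~\eqref{rec:W} forces a short case analysis, but once the sign table is in place, counting sign alternations and matching the total to $d_n$ simultaneously establishes real-rootedness of $W_n$ and the exact interlacing pattern claimed.
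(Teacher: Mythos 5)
Your proposal is correct and follows essentially the same route as the paper: strong induction with base cases $n\le 2$, substituting the roots of $W_{n-1}$ (resp.\ $W_{n-2}$) into recurrence~\eqref{rec:W} to annihilate one term, invoking Lemma~\ref{lem:interlacing} on the two interlacing pairs supplied by parts~(i) and~(ii) of the induction hypothesis, and a parity check to show the two surviving terms reinforce. The endpoint analysis via the signs of $W_n$ at $-\infty$ and at $0$ to place the extremal root(s) also matches the paper's argument.
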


\begin{proof}
From the initial polynomials~\eqref{ini:W}, it is easy to verify Theorem~\ref{thm:RR} for $n\le 2$.
We suppose that $n\ge3$ and proceed inductively.

For every $k\le n-1$, we denote the roots of $W_k(z)$ by $x_{k,1}<x_{k,2}<\cdots<x_{k,d_k}$.  For convenience, we define $x_{k,0}=-\infty$ and $x_{k,d_k+1}=0$, for all $k\le n-1$.  To clarify the interlacing properties, we now consider the signs of the function $W_m(z)$ at $-\infty$ and at the origin, for any $m\ge0$. 
Since $W_m(z)$ is a polynomial of degree $d_m$, with all coefficients non-negative, we deduce that 
\begin{equation}\label{sgn:claw:-infty}
(-1)^{d_m}W_m(-\infty)>0.
\end{equation}
Having the constant term positive implies that 
\begin{equation}\label{sgn:claw:0}
W_m(0)=g_{n,0}>0.
\end{equation}

By the intermediate value theorem and Inequality~(\ref{sgn:claw:-infty}), for  the polynomial~$W_n(z)$ to have~$d_n=\deg{W_n(z)}$ distinct negative roots and for Part~(i) of Theorem~\ref{thm:RR} to hold, it is necessary and sufficient that
\begin{equation}\label{dsr:claw:n:n-2}
(-1)^{d_n+j}W_n(x_{n-2,j})>0 \quad\text{for $1\le j\le d_{n-2}+1$.}
\end{equation}
In fact, for $j=d_{n-2}+1$, 
Inequality~(\ref{dsr:claw:n:n-2}) becomes
\begin{equation} \label{eq:dd1}
(-1)^{d_n+d_{n-2}+1}W_n(0)>0.
\end{equation}
By~(\ref{sgn:claw:0}), Inequality \eqref{eq:dd1} holds if and only if $d_n+d_{n-2}$ is odd,
which is true since
\[
d_n+d_{n-2} \,=\, \llceil\frac{n+1}{2}\rrceil+\llceil\frac{n-1}{2}\rrceil \,=\, 2\llceil\frac{n-1}{2}\rrceil+1.
\]

Now consider any $j$ such that $1\le j\le d_{n-2}$. We are going to prove~(\ref{dsr:claw:n:n-2}).
We will use the particular indicator function $\mathrm{I}_{\mathrm{even}}$, which is defined by
\[
\mathrm{I}_{\mathrm{even}}(n) \,=\, \begin{cases}
1,&\text{if $n$ is even},\\
0,&\text{if $n$ is odd}.
\end{cases}
\] 
Note that $x_{n-2,j}$ is a root of $W_{n-2}(z)$.  By Recurrence~(\ref{rec:W}), we have
\begin{equation}\label{rec:claw:n:n-2}
   W_n(z_{n-2,j}) \,=\, x_{n-2,j}^{\mathrm{I}_{\mathrm{even}}(n)}
   \Bigl(20W_{n-1}(x_{n-2,j})-384x_{n-2,j}W_{n-3}(x_{n-2,j})\Bigr).
\end{equation}
Since $x_{n-2,j}<0$, the factor $x_{n-2,j}^{\mathrm{I}_{\mathrm{even}}(n)}$ contributes $(-1)^{n+1}$ to the sign of the right hand side of~(\ref{rec:claw:n:n-2}). On the other hand,
it is clear that the sign of the parenthesized factor can be determined if both the summands
$20W_{n-1}(x_{n-2,j})$ and $-384x_{n-2,j}W_{n-3}(x_{n-2,j})$ have the same sign.
Therefore, Inequality~(\ref{dsr:claw:n:n-2}) holds if
\begin{align}
(-1)^{d_n+j+n+1}W_{n-1}(x_{n-2,j})&>0,\label{dsr:claw:n-1:n-2}\\
(-1)^{d_n+j+n+1}W_{n-3}(x_{n-2,j})&>0.\label{dsr:claw:n-3:n-2}
\end{align}

By the induction hypothesis on part~(ii) of this theorem, we can substitute $P=W_{n-1}$ and $Q=W_{n-2}$ into Lemma~\ref{lem:interlacing}.  Then Inequality~(\ref{lem:interlacing:P}) gives
\begin{equation}\label{sgn:claw:n-1:n-2}
(-1)^{d_{n-1}+j}W_{n-1}(x_{n-2,j})>0.
\end{equation}
Thus, Inequality~(\ref{dsr:claw:n-1:n-2}) holds if and only if
the total power
\[
d_n+j+n+1+d_{n-1}+j ~=~ \llceil\frac{n+1}{2}\rrceil+\llceil\frac{~n~\vphantom{1}}{2}\rrceil+n+2j+1
\]
of $(-1)$ in~(\ref{dsr:claw:n-1:n-2}) and~(\ref{sgn:claw:n-1:n-2}) is even, which is clear by a simple parity argument. Moreover, again using the induction hypothesis on part~(ii), we can make substitutions $P(x)=W_{n-2}(x)$ and $Q(x)=W_{n-3}(x)$ into Lemma~\ref{lem:interlacing}.  Then  Inequality~(\ref{lem:interlacing:Q}) gives 
\begin{equation}\label{sgn:claw:n-3:n-2}
(-1)^{d_{n-3}+j}W_{n-3}(x_{n-2,j})<0.
\end{equation}
Thus, Inequality~(\ref{dsr:claw:n-3:n-2}) holds if and only if the total power
\begin{equation}\label{totalpower:n-3}
d_n+j+n+1+d_{n-3}+j ~=~ \llceil\frac{n+1}{2}\rrceil+\llceil\frac{n-2}{2}\rrceil+n+2j+1
\end{equation}
of $(-1)$ in~(\ref{dsr:claw:n-3:n-2}) and~(\ref{sgn:claw:n-3:n-2}) is odd, which is also clear by a simple parity argument.  This completes the proof of~(\ref{dsr:claw:n:n-2}), and the proof of Part~(i).

The approach to proving Part (ii) is similar to that used to prove Part (i).  By the intermediate value theorem and Inequality~(\ref{sgn:claw:-infty}), Part (ii) holds if and only if
\begin{equation}\label{dsr:claw:n:n-1}
(-1)^{d_n+j}W_n(x_{n-1,j})>0 \quad\text{for $1\le j\le d_{n-1}$,}
\end{equation}
and also for $j=d_{n-1}+1$ when $n$ is even. In fact, when $n$ is even and $j=d_{n-1}+1$,
we have 
\begin{equation}\label{dsrA:claw:n:n-1}
(-1)^{d_n+d_{n-1}+1}W_n(0)>0.
\end{equation}
By~(\ref{sgn:claw:0}), Inequality \eqref{dsrA:claw:n:n-1} holds if and only if $(-1)^{d_n+d_{n-1}+1}=1$,
which is clear since
\[
d_n+d_{n-1}+1 ~=~ \llceil\frac{n+1}{2}\rrceil+\llceil\frac{~n~\vphantom{1}}{2}\rrceil+1=n+2.
\]

For $1\le j\le d_{n-1}$, we are now going to show~(\ref{dsr:claw:n:n-1}).  By setting $x=x_{n-1,j}$, Recurrence~(\ref{rec:W}) turns into
\begin{equation}\label{rec:claw:n:n-1}
W_n(x_{n-1,j}) ~=~ 8(3-8x_{n-1,j})W_{n-2}(x_{n-1,j})
- 384x_{n-1,j}^{1+\mathrm{I}_{\mathrm{even}}(n)}W_{n-3}(x_{n-1,j}).
\end{equation}
Since $x_{n-1,j}<0$, we see that $8(3-8x_{n-1,j})>0$, and that the factor
$-384x_{n-1,j}^{1+\mathrm{I}_{\mathrm{even}}(n)}$ contributes $(-1)^{n+1}$ to
the sign of the right-hand side of~(\ref{rec:claw:n:n-1}).
Therefore, Inequality~(\ref{dsr:claw:n:n-1}) holds if
\begin{align}
(-1)^{d_n+j}W_{n-2}(x_{n-1,j})&>0,\label{dsr:claw:n-2:n-1}\\
(-1)^{d_n+j+n+1}W_{n-3}(x_{n-1,j})&>0.\label{dsr:claw:n-3:n-1}
\end{align}

Substituting $P(x)=W_{n-1}(x)$ and $Q(x)=W_{n-2}(x)$ into Lemma~\ref{lem:interlacing},
we find that Inequality~(\ref{lem:interlacing:Q}) yields 
\begin{equation}\label{sgn:claw:n-2:n-1}
(-1)^{d_{n-2}\,+\,j}W_{n-2}(x_{n-1,j})<0 \quad\text{when $1\le j\le d_{n-1}$.} 
\end{equation}
Thus, Inequality~(\ref{dsr:claw:n-2:n-1}) holds if and only if the total power
\[
d_n+j+d_{n-2}+j ~=~ \llceil\frac{n+1}{2}\rrceil+\llceil\frac{n-1}{2}\rrceil+2j
\]
of $(-1)$ in~(\ref{dsr:claw:n-2:n-1}) and~(\ref{sgn:claw:n-2:n-1}) is odd, which holds true, obviously, by parity.  On the other hand, by the induction hypothesis on Part~(i) and substituting $P(x)=W_{n-1}(x)$ and $Q(x)=W_{n-3}(x)$ into Lemma~\ref{lem:interlacing}, Inequality~(\ref{lem:interlacing:Q}) becomes 
\begin{equation}\label{sgn:claw:n-3:n-1}
(-1)^{d_{n-3}+j}W_{n-3}(x_{n-1,j})<0.
\end{equation}
Therefore, Inequality~(\ref{dsr:claw:n-3:n-1}) holds if and only if the total power
$d_n+j+n+1+d_{n-3}+j$
of $(-1)$ in~(\ref{dsr:claw:n-3:n-1}) and~(\ref{sgn:claw:n-3:n-1}) is odd,
which coincides with~(\ref{totalpower:n-3}).
This completes the proof of~(\ref{dsr:claw:n:n-1}), ergo the proof of Part~(ii),
and hence the entire theorem.
\end{proof}

\begin{cor}
The sequence of coefficients for every genus polynomial $\Gamma_{Y_n}(z)$ is log-concave. 
\end{cor}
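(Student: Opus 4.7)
The plan is to combine Theorem~\ref{thm:RR} with Newton's theorem, exactly as foreshadowed in the opening paragraph of Section~\ref{sec:rr}. The overall chain is: real-rootedness of $W_n(z)$ together with positivity of its coefficients implies log-concavity of the coefficients of $W_n(z)$, and then log-concavity of the coefficient sequence of $\Gamma_{Y_n}(z)$ follows because the two sequences differ only by a leading block of zeros.

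For the first step, I would recall from~\eqref{def:W} that $\Gamma_{Y_n}(z) = z^{\lfloor (n+1)/2 \rfloor}\,W_n(z)$, and from Theorem~\ref{thm:C} that $W_n(z) \in \mathcal{P}_{0,d_n}$, so every coefficient of $W_n(z)$ is a positive integer. Since Theorem~\ref{thm:RR} declares $W_n(z)$ real-rooted, and since a polynomial with positive coefficients cannot vanish at any nonnegative real number, every root of $W_n(z)$ is in fact strictly negative. Newton's theorem then delivers the log-concavity of the coefficient sequence of $W_n(z)$.

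For the second step, I would observe that the coefficient sequence of $\Gamma_{Y_n}(z)$ is obtained from that of $W_n(z)$ by prepending $\lfloor (n+1)/2 \rfloor$ zeros. Checking the inequality $a_{k-1}a_{k+1} \le a_k^2$ at the boundary between the leading zero block and the first positive coefficient yields $0 \cdot a_{k+1} \le 0$, which is trivial; positions lying entirely inside the zero block satisfy $0 \le 0$, and positions lying entirely inside the positive block are handled by the previous step. Corollary~\ref{cor:interp} guarantees that no internal zeros arise, so the full sequence is nonnegative, log-concave, and unimodal.

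The only point requiring any care---and the closest thing to an obstacle---is this last shift: one must explicitly confirm that prepending zeros to a log-concave sequence of positive entries preserves log-concavity. Beyond that bookkeeping, the corollary is an immediate consequence of Theorem~\ref{thm:RR} and Theorem~\ref{thm:C} via Newton's theorem, so I do not expect any substantive difficulty.
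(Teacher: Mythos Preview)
Your proposal is correct and follows exactly the route the paper intends: the corollary is stated without proof because, as the opening paragraph of Section~\ref{sec:rr} announces, it follows from Theorem~\ref{thm:RR} via Newton's theorem. Your extra care with the prepended zeros is fine, though you could shortcut it by noting that $\Gamma_{Y_n}(z)=z^{\lfloor (n+1)/2\rfloor}W_n(z)$ is itself real-rooted with nonnegative coefficients, so Newton's theorem applies to it directly.
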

\bigskip

\end{document}